\numberwithin{equation}{section}
\newtheorem{thm}{Теорема}[section]
\newtheorem{stat}{Утверждение}[section]
\newtheorem{rim}{Замечание}[section]
\newcommand{\sgn}{\operatorname{sgn}}
\newcommand{\esssup}{\mathop{\rm ess\,sup}\limits}
\title[]{Точные оценки производных высокого порядка в пространствах Соболева}
\author{Т.А.~Гарманова, И.А.~Шейпак}
\address{Московский государственный университет
им.~М.~В.~Ломоносова, механико-математический факультет, Московский центр фундаментальной
и прикладной математики}
\email{garmanovata@gmail.com, iasheip@yandex.ru}
\thanks{Результаты разделов 2 и 3 получены при  поддержке  фонда <<Базиc>> (<<Стипендии — Механико-математический факультет>> для аспирантов Механико-математического факультета МГУ им. М.В. Ломоносова -- 2021), результаты разделов 4 и 5 получены при  поддержке РНФ грант №~20-11-20261}
\begin{document}
\noindent УДК~517.518.23, MSC 2010 Classification Codes 26D10, 46E35, 46E22
\begin{abstract}
 В работе получено описание сплайнов $Q_{n,k}(x,a)$, которые для произвольной точки $a\in(0;1)$ и произвольной функции $y\in\mathring{W}^n_p[0;1]$ задают соотношения $y^{(k)}(a)=\int_0^1 y^{(n)}(x)Q^{(n)}_{n,k}(x,a)dx$. Указана связь задачи о минимизации нормы $Q^{(n)}_{n,k}$ в $L_{p'}[0;1]$ ($1/p+1/p'=1$) с задачей о наилучших оценках производных $y^{(k)}(a)\leqslant A_{n,k,p}\|y^{(n)}\|_{L_p[0;1]}$, а также  c задачей нахождения точных констант вложения пространства Соболева   $\mathring{W}^n_p[0;1]$ в пространство $\mathring{W}^k_\infty[0;1]$, $n\in\mathbb{N}$,  $k=0,1,\ldots, n-1$. Найдены точные константы вложения при $k=n-1$ и $p=\infty$, а также при всех  $n\in\mathbb{N}$,  $k=0,1,\ldots, n-1$ и $p=1$.
\end{abstract}
\maketitle

Ключевые слова: оценки производных, пространства Соболева, теоремы вложения

\section{Введение}

Рассмотрим пространство Соболева $\mathring{W}^n_p[0;1]$, состоящее из вещественнозначных функций, все производные которых до $n-1$-го порядка абсолютно непрерывны,  $f^{(n)}\in L_p[0;1]$, а также удовлетворяющих краевым условиям $y^{(j)}(0)=y^{(j)}(1)=0$, $j=0,1,\ldots,n-1$. Это пространство снабжено естественной нормой
$$
\|y\|_{\mathring{W}^n_p[0;1]}:=\left(\int_0^1|y^{(n)}(x)|^pdx\right)^{1/p}.
$$
В случае $p=\infty$ норма определяется формулой
$$
\|y\|_{\mathring{W}^n_\infty[0;1]}:=\esssup_{x\in[0;1]}|y^{(n)}(x)|
$$

Во многих задачах теории функций, функционального анализа, математической физики и в других областях возникает вопрос о нахождении точных констант $\Lambda_{n,k,p,q}$
\begin{equation}\label{eq:La}
\Lambda_{n,k,p,q}:=\sup\{\|y^{(k)}\|_{L_q[0;1]}: \; \|y^{(n)}\|_{L_p[0;1]}=1\},
\end{equation}
которые  равны норме операторов вложения пространств $\mathring{W}^n_p[0;1]$  в пространства $\mathring{W}^k_q[0;1]$, $n\in\mathbb{N}$,  $k=0,1,\ldots,n-1$. Кроме того, в задачах о нахождении $\Lambda_{n,k,p,q}$ нередко возникают специальные функций и тем самым служат развитию теории специальных функций.

Для $n=1$ и $k=0$ эта константа вычислена в \cite{Shm} при всех значениях $p$ и $q$. При $n=2$, $k=1$  и $q\leqslant 3p$ эти константы получены в \cite{Naz}. Вычисление этих констант для произвлдьных $p,q\geqslant 1$, $n\in\mathbb{N}$, $k=0,1,\ldots,n-1$ оказалось сложной задачей. Отдельный интерес представляет задача о нахождении $\Lambda_{n,k,p,q}$ при $q=\infty$. В этом случае возникает дополнительная задача о нахождении точных величин $A_{n,k,p}(a)$ в неравенствах

\begin{equation}\label{eq:A}
|y^{(k)}(a)|\leqslant A_{n,k,p}(a)\cdot\|y^{(n)}\|_{L_p[0;1]}.
\end{equation}

Тогда точную   константу вложения пространства  $\mathring{W}^n_p[0;1]$ в пространство $\mathring{W}^k_\infty[0;1]$ можно определить как
$$
\Lambda_{n,k,p,\infty}=\sup_{a\in[0;1]}A_{n,k,p}(a).
$$
Здесь наиболее изучен случай $p=2$. В работе \cite{Kalyab} получено представление для $A^2_{n,k,2}(a)$ в виде ряда по первообразным полиномов Лежандра и получены значения $\Lambda_{n,k,2,\infty}$ при $k=0,1,2$. В работах \cite{NazM}--\cite{NazM2} получены константы $\Lambda_{n,k,2,\infty}$ при $k=4,6$.  В \cite{Garm} для  величин $A^2_{n,k,2}(a)$ найдено представление через гипергеометрические функции типа ${}_3F_2$.
Также в работе \cite{Kalyab} А.Г.~Калябиным была выдвинута гипотеза, что при четных $k$ глобальный максимум функции $A^2_{n,k,2}$ достигается в середине отрезка, а при нечетных $k$ функция $A^2_{n,k,2}$ имеет в середине отрезка локальный минимум. В работе \cite{NazM} доказано, что при четных $k$ середина отрезка является для функций $A^2_{n,k,2}$ локальным максимумом, а при нечетных --- локальным минимумом. В работе \cite{GarmSh2} методом построения огибающей гипотеза А.Г.~Калябина полностью доказана. Более того, показано, что при нечетных $k$ точкой глобального максимума функции $A_{n,k,2}$ является точка локального максимума, ближайшая к середине отрезка.

В этой работе мы будем изучать свойства сплайнов $Q_{n,k}(x,a)$ которые задают в пространствах $\mathring{W}^n_p[0;1]$ функционалы $f\mapsto f^{(k)}$ ($a\in(0;1)$). Задача нахождения величин $A_{n,k,p}(a)$ непосредственно связана с минимизацией нормы сплайнов $Q^{(n)}_{n,k}(x,a)$ в $L_{p'}[0;1]$ ($1/p+1/p'=1$).  В свой очередь задача о минимизации $\|Q^{(n)}_{n,k}\|_{L_{p'}[0;1]}$ приводит к задаче о наилучшем приближении сплайна
$$
S_{n,k}(x,a)=\left\{\begin{aligned}
&\frac{(-1)^{n-k-1}(x-a)^{n-k-1}}{(n-k-1)!},\quad x\in[0;a],\\
&0,\quad x\in[a;1]\end{aligned}\right.
$$
алгебраическими многочленами. Здесь отдельное место занимает случай $k=n-1$ (сплайн $S_{n,n-1}(x,a)=\chi_{[0;a]}$ разрывен и равен характеристической функции отрезка $[0;a]$). В случае  $p=\infty$ также удается построить огибающую для $A_{n,n-1,\infty}(a)$, для которой, как и для $p=2$, оказывается верной гипотеза А.Г.Калябина (см. раздел 4).  Также рассматривается и другой предельный случай $p=1$, в котором максимум $A_{n,k,1}(a)$ не зависит от точки $a$. Задача о приближении характеристической функции тригонометрическими многочленами решена в \cite{BK}.

Структура работы следующая. В разделе 2  получено интегральное представления функционала $y\mapsto y^{(k)}(a)$ для функций $y\in\mathring{W}^n_p[0;1]$. В разделе 3
установлена связь задачи о нахождении наилучших величин $A_{n,k,p}(a)$  в неравенствах \eqref{eq:A} c нахождением сплайнов специального вида, наименее уклоняющихся от нуля в $L_{p'}[0;1]$. В разделе 4  на основе свойств сплайнов $Q^{(n)}_{n,k}(x,a)$ получены точные константы вложения $\Lambda_{n,n-1,\infty,\infty}$. В разделе 5 получены точные константы вложения $\Lambda_{n,k,1,\infty}$  при всех $n\in\mathbb{N}$ и $k=0,1,\ldots,n-1$.

\section{Функционал означивания}\label{sec:2}

В этом разделе мы получим интегральное представление функционалов $y\mapsto y^{(k)}(a)$ в $\mathring{W}^n_p[0;1]$ и  изучим некоторые свойства функций $Q_{n,k}$, для которых
$$
y^{(k)}(a)=\int_{0}^1y^{(n)}Q^{(n)}_{n,k}(x,a)\,dx.
$$

Рассмотрим функции

\begin{equation}\label{eq:Q}
Q^{(n)}_{n,k}(x,a):=
\left\{\begin{aligned}
&\sum_{l=0,l\ne k}^{n-1}
\frac{(-1)^{n-l-1}(x-a)^{n-1-l}\nu_l}{(n-1-l)!}+\frac{(-1)^{n-1-k}(x-a)^{n-1-k}(\nu_k+1)}{(n-1-k)!},\quad x\in [0;a)\\
&\sum_{l=0}^{n-1}\frac{(-1)^{n-l-1}(x-a)^{n-1-l}\nu_l}{(n-1-l)!}, \quad x\in (a;1],
\end{aligned}
\right.
\end{equation}
где $\nu_l=\nu_l(a)$ --- некоторые коэффициенты, зависящие от $a$, $l=0,1,\ldots,n-1$.

\begin{stat}\label{st:11}
Для произвольной функции $y\in\mathring{W}^n_p[0;1]$ выполнено
\begin{equation}\label{eq:zn_a}
y^{(k)}(a)=\int_{0}^1y^{(n)}Q^{(n)}_{n,k}(x,a)\,dx.
\end{equation}
\end{stat}
\begin{proof}
Справедливость утверждения следует из формулы \eqref{eq:Q} и интегрирования по частям интегралов $\int_{0}^a y^{(n)}Q^{(n)}_{n,k}(x,a)\,dx$ и $\int_{a}^1y^{(n)}Q^{(n)}_{n,k}(x,a)\,dx$.
\end{proof}

Пользуясь представлением \eqref{eq:zn_a} получаем оценку для $k$-ой производной функции $y$:
\begin{equation}\label{eq:Held}
|y^{(k)}(a)|\leqslant \|y^{(n)}\|_{L_p[0;1]}\cdot \|Q^{(n)}_{n,k}\|_{L_{p'}[0;1]}.
\end{equation}

Для получения точной оценки в неравенстве \eqref{eq:A} необходимо решить две задачи: 1) в классе функций вида \eqref{eq:Q} найти функцию $\hat Q_{n,k}$, $n$-ая производная которой имеет наименьшую норму в $L_{p'}[0;1]$;
2) для функции $\hat Q_{n,k}$ построить экстремальную функцию, для которой в неравенстве \eqref{eq:Held} достигается равенство. Покажем, что это связанные друг с другом задачи.

\begin{stat}\label{stat:minQ}
Пусть $1<p<\infty$ и пусть $\hat Q_{n,k}$ --- функция  вида \eqref{eq:Q}, для которой
$$
\|\hat Q^{(n)}_{n,k}\|_{L_{p'}[0;1]}=\min_{\nu_0,\nu_1,\ldots, n_{n-1}}\|Q^{(n)}_{n,k}\|_{L_{p'}[0;1]}.
$$
Тогда она удовлетворяет соотношениям
\begin{equation}\label{eq:orth}
\int_0^1 x^j |\hat Q^{(n)}_{n,k}|^{p'-1}\sgn\hat Q^{(n)}_{n,k}dx=0,\quad j=0,1,\ldots, n-1.
\end{equation}
\end{stat}
\begin{proof}
Так как надо минимизировать норму по параметрам $\nu_0$, $\nu_1$, \ldots, $\nu_{n-1}$, то необходимо выполнены условия
$$
\frac{\partial}{\partial \nu_j}\|\hat Q^{(n)}_{n,k}\|^{p'}_{L_{p'}[0;1]}=0, \quad j=0,1,\ldots,n-1.
$$

Так как
$$
\frac{\partial}{\partial \nu_j}\|\hat Q^{(n)}_{n,k}\|^{p'}_{L_{p'}[0;1]}=\frac{p'(-1)^{n-1-j}}{(n-j-1)!}\int_0^1 (x-a)^j |\hat Q^{(n)}_{n,k}|^{p'-1}\sgn\hat Q^{(n)}_{n,k}dx,
$$
то из равенств $\frac{\partial}{\partial \nu_j}\|\hat Q^{(n)}_{n,k}\|^{p'}_{L_{p'}[0;1]}=0$ при $j=0,1,\ldots,n-1$  следует утверждение леммы.
\end{proof}

\begin{rim}
Условия \eqref{eq:orth} являются и достаточными. Поскольку норма выпуклая функция, то минимум существует.
\end{rim}

\begin{stat}\label{stat:orth1}
Пусть $1<p<\infty$ и пусть функция $h\in L_p[0;1]$ удовлетворяет соотношениям
$$
\int_{0}^1 x^j h(x)\,dx=0,\quad j=0,1,\ldots,n-1.
$$
Тогда существует решение уравнения
$$
y^{(n)}=h(x),
$$
принадлежащее пространству $\mathring{W}^{n}_p[0;1]$.
\end{stat}
\begin{proof}
Положим
$$
y(x)=\int_0^x\frac{(x-t)^{n-1}}{(n-1)!}h(t)\,dt
$$

Краевые условия в нуле выполнены в силу интеграла по отрезку $[0;x]$, а в единице --- в силу свойств функции $h$.
\end{proof}

Введем функцию
\begin{equation}\label{eq:w}
w^{(n)}_{n,k}(x):=\left|\hat Q^{(n)}_{n,k}(x)\right|^{p'-1}\cdot \sgn \hat Q^{(n)}_{n,k}(x).
\end{equation}

Поскольку $\hat Q^{(n)}_{n,k}$ является сплайном, то $w^{(n)}_{n,k}\in L_p[0;1]$. А тогда из утверждений \ref{stat:minQ} и \ref{stat:orth1} следует, что существует ее первообразная $w_{n,k}\in\mathring{W}^n_p[0;1]$, для  которой достигается равенство в неравенстве \eqref{eq:A} с наименьшей возможной величиной $A_{n,k,p}(a)$, т.е. для которой выполнено
$$
w^{(k)}_{n,k}(a)= A_{n,k,p}(a)\cdot\|w^{(n)}_{n,k}\|_{L_p[0;1]}.
$$

\section{Связь задачи о минимизации нормы $Q^{(n)}_{n,k}$ с минимизацией уклонений}

Напомним необходимые сведения о полиномах Лежандра на отрезке $[0;1]$. Полином Лежандра определяется как $P_m(x):=\frac{1}{m!}((x^2-x)^m)^{(m)}$. Первообразную порядка $j\geqslant 0$ определим следующим образом:  $P^{(-j)}_m(x):=\frac{1}{m!}((x^2-x)^m)^{(m-j)}$. Система полиномов Лежандра образует ортогональный базис в $L_2[0;1]$ c нормировкой
 $\|P_m\|^2_{L_2[0;1]}=\frac{1}{2m+1}$.

Задача о нахождении $\min_{\nu_0,\nu_1,\ldots, n_{n-1}}\|Q^{(n)}_{n,k}\|_{L_{p'}[0;1]}$ тесно связана со следующей задачей. Известно (см. \cite{GarmSh}), что при $p=2$ функционал $f\mapsto f^{(k)}(a)$ в $\mathring{W}^n_2[0;1]$ задан в соответствии с теоремой Рисса единственной функцией $g_{n,k}(x,a)$, которая  имеет вид
\begin{equation}\label{eq:g}
g_{n,k}(x,a)=\left\{\begin{aligned}
&\dfrac{(-1)^{n-k-1}}{(2n-k-1)!}(1-a)^{n-k}x^n h_{n,k}(1-x,1-a),\quad x\in [0;a]\\
&\dfrac{(-1)^{n-1}}{(2n-k-1)!}a^{n-k}(1-x)^{n} h_{n,k}(x,a), \quad x\in [a;1],
\end{aligned}\right.
\end{equation}
где
$$
h_{n,k}(x,a)=\sum_{l=0}^{n-1}(-1)^{n-1-l}C_{2n-1-k}^{n-1-l}x^{n-1-l}a^l\sum_{m=0}^{l}C_{n-1+m}^{m}x^{m}.
$$

Т.е.
$$
f^{(k)}(a)=\int_0^1 f^{(n)}(x)g^{(n)}_{n,k}(x,a)dx.
$$

\begin{thm}\label{thm:gQ}
Функции $\hat Q^{(n)}_{n,k}$ и $g^{(n)}_{n,k}$ отличаются на многочлен степени $n-1$, т.е.
$$
\hat Q^{(n)}_{n,k}-g^{(n)}_{n,k}=\sum_{l=0}^{n-1} c_l(a)x^l.
$$
\end{thm}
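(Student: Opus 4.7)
The plan is to exploit the fact that both $\hat Q^{(n)}_{n,k}(\cdot,a)$ and $g^{(n)}_{n,k}(\cdot,a)$ are integral representers of the same linear functional $y\mapsto y^{(k)}(a)$, and then to invoke the characterisation of the range of $y\mapsto y^{(n)}$ on $\mathring{W}^n_2[0;1]$ supplied by Statement~\ref{stat:orth1}. Explicitly, Statement~\ref{st:11} applied to the particular choice of $\nu_l$ that defines $\hat Q$ gives
\[
y^{(k)}(a)=\int_0^1 y^{(n)}(x)\,\hat Q^{(n)}_{n,k}(x,a)\,dx\qquad\forall\,y\in\mathring{W}^n_2[0;1],
\]
while the representation of $g$ recalled from \cite{GarmSh} gives
\[
y^{(k)}(a)=\int_0^1 y^{(n)}(x)\,g^{(n)}_{n,k}(x,a)\,dx\qquad\forall\,y\in\mathring{W}^n_2[0;1].
\]
Setting $D(x):=\hat Q^{(n)}_{n,k}(x,a)-g^{(n)}_{n,k}(x,a)$ and subtracting, I obtain
\[
\int_0^1 y^{(n)}(x)\,D(x)\,dx=0\qquad\forall\,y\in\mathring{W}^n_2[0;1].
\]

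Next I observe that $\hat Q^{(n)}_{n,k}$ is piecewise polynomial of degree at most $n-1$ on $[0,a]\cup[a,1]$ directly from its definition~\eqref{eq:Q}, and the same holds for $g^{(n)}_{n,k}$, since from~\eqref{eq:g} the function $g_{n,k}$ is piecewise polynomial of degree $2n-1$ on each piece, so its $n$th derivative has degree at most $n-1$. In particular $D\in L_\infty[0;1]\subset L_2[0;1]$. Applying Statement~\ref{stat:orth1} with $p=2$, the range $\{y^{(n)}:y\in\mathring{W}^n_2[0;1]\}$ coincides with the $L_2$-annihilator of $\mathcal{P}_{n-1}:=\mathrm{span}\{1,x,\ldots,x^{n-1}\}$. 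Since $D$ lies in $L_2$ and is orthogonal to this annihilator, duality in $L_2$ forces $D\in\mathcal{P}_{n-1}$, which is exactly the asserted representation $\hat Q^{(n)}_{n,k}-g^{(n)}_{n,k}=\sum_{l=0}^{n-1}c_l(a)x^l$.

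The argument is essentially a synthesis of results already at hand, so I do not expect a serious technical obstacle. The one point worth a brief check is that Statement~\ref{st:11} was formulated for $y\in\mathring{W}^n_p[0;1]$, whereas I want to apply the identity for $y\in\mathring{W}^n_2[0;1]$; however $\hat Q^{(n)}_{n,k}$ is bounded (being piecewise polynomial), so the integration-by-parts argument underlying Statement~\ref{st:11} applies verbatim to any $y$ with $n$-fold vanishing boundary data, and in particular to $y\in\mathring{W}^n_2[0;1]$. With this remark in place, the proof reduces to the one-line Hilbert-space duality $D\perp \mathcal{P}_{n-1}^{\perp}\Longrightarrow D\in\mathcal{P}_{n-1}$.
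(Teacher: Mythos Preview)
Your argument is correct. Both $\hat Q^{(n)}_{n,k}$ and $g^{(n)}_{n,k}$ represent the same functional on $\mathring{W}^n_2[0;1]$, so their difference annihilates the range $\{y^{(n)}:y\in\mathring{W}^n_2[0;1]\}$, which by Statement~\ref{stat:orth1} contains $\mathcal{P}_{n-1}^{\perp}$; hence $D\in(\mathcal{P}_{n-1}^{\perp})^{\perp}=\mathcal{P}_{n-1}$. Note also that the minimizing property of $\hat Q$ plays no role --- your argument, like the paper's, works for every $Q^{(n)}_{n,k}$ of the form~\eqref{eq:Q}.

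The paper takes a more explicit route: it expands $Q^{(n)}_{n,k}$ and $g^{(n)}_{n,k}$ in Legendre polynomials $P_m$ on $[0;1]$ and computes the coefficients directly. For $m\geqslant n$ the antiderivative $P^{(-n)}_m$ lies in $\mathring{W}^n_2[0;1]$, so integration by parts (equivalently, Statement~\ref{st:11} applied to $y=P^{(-n)}_m$) yields $\langle Q^{(n)}_{n,k},P_m\rangle=P^{(k-n)}_m(a)$, and the same computation gives the identical value for $g^{(n)}_{n,k}$; thus all Legendre coefficients of order $\geqslant n$ agree. Your abstract duality argument is shorter and avoids introducing the Legendre basis, while the paper's computation produces the explicit coefficients $(2m+1)P^{(k-n)}_m(a)$ as a by-product, which may be of independent use.
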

\begin{proof}

1) Из вида \eqref{eq:Q} следует, что $Q^{(n)}_{n,k}\in L_2[0;1]$, поэтому $Q^{(n)}_{n,k}$ можно разложить в ряд по ортогональной системе полиномов Лежандра:
$$
Q^{(n)}_{n,k}(x,a)=\sum_{m=0}^\infty \alpha_m(a) P_m(x).
$$

Из свойств функций \eqref{eq:Q} и свойств ортогональности многочленов Лежандра следует, что
$$
\int_0^1 Q^{(n)}_{n,k} P_m(x)dx=\int_0^1 Q^{(n)}_{n,k} \left(P^{(-n)}_m(x)\right)^{(n)}dx=P^{(k-n)}_m(a)
$$
С другой стороны, при  $m\geqslant n$ первообразные полиномов Лежандра $P^{(-n)}_m$ удовлетворяют условиям Дирихле, а поэтому ортогональны всем степеням $x^j$ при $j=0,1,\ldots, n-1$, а следовательно и всем полиномам Лежандра $P_m$ при $m<n$.  Следовательно, при $m\geqslant n$ имеем
$$
\alpha_m(a)=(2m+1)P^{(k-n)}_m(a).
$$

Таким образом,
$$
Q^{(n)}_{n,k}(x,a)=\sum_{m=0}^{n-1} \alpha_m P_m(x)+\sum_{m=n}^{\infty} (2m+1)P^{(k-n)}_m(a) P_m(x).
$$

2) Функции $g_{n,k}$ удовлетворяют условиям Дирихле, поэтому разложение $g^{(n)}_{n,k}$ в ряд по полиномам Лежандра имеет вид
$$
g^{(n)}_{n,k}(x,a)=\sum_{m=n}^\infty \beta_m(a) P_m(x).
$$

Как и пункте 1) доказательства получаем, что $\beta_m=(2m+1)P^{(k-n)}_m(a)$, откуда и следует утверждение теоремы.
\end{proof}

Определим $\mathcal{P}_m$ --- пространство вещественных многочленов
$$
\mathcal{P}_m=\left\{\sum_{j=0}^m c_jx^j,\quad x,c_j\in\mathbb{R}, j=0,1,\ldots,m\right\}
$$
степени не выше $m$.

Таким образом, теорема \ref{thm:gQ} утверждает, что
\begin{equation}\label{eq:ming}
\min_{\nu_0,\nu_1,\ldots,\nu_{n-1}}\|Q^{(n)}_{n,k}\|_{L_{p'}[0;1]}=\min_{u\in\mathcal{P}_{n-1}}\|g^{(n)}_{n,k}-u\|_{L_{p'}[0;1]}.
\end{equation}

\section{Точные константы вложения при $k=n-1$, $p=\infty$}

Рассмотрим следующие функции
\begin{gather*}
g_1(x,a):=g^{(n)}_{n,n-1}(x,a)\quad x\in[0;a],\\
g_2(x,a):=g^{(n)}_{n,n-1}(x,a)\quad x\in(a;1].
\end{gather*}

Из формулы \eqref{eq:g} следует, что $g_1(x,a)-g_2(x,a)=1$, т.е. задача минимизации \eqref{eq:ming} при $k=n-1$, $p=\infty$ равносильна следующей экстремальной задаче:
$$
\|\chi_{[0;a]}-u\|_{L_1[0;1]}\to \min,\quad u\in\mathcal{P}_{n-1}.
$$

 При этом, из определения функций $A_{n,k,p}$ следует, что
\begin{equation}\label{eq:gA}
A_{n,k,p}(a)=\min_{\nu_0,\nu_1,\ldots,\nu_{n-1}}\|Q^{(n)}_{n,k}\|_{L_{p'}[0;1]}.
\end{equation}

Таким образом
\begin{equation}\label{eq:Amin}
A_{n,n-1,\infty}(a)=\min_{u \in\mathcal{P}_{n-1}}\|\chi_{[0;a]}-u\|_{L_1[0;1]}
\end{equation}

Справедлива следующая теорема.
\begin{thm}
Рассмотрим функцию
$$
B_n(a)=\tg\frac{\pi}{2(n+1)}\sqrt{a-a^2}.
$$

Точки локальных максимумов функций $A_{n,n-1,\infty}(a)$ на отрезке $[0;1]$ равны $a_j=\sin^2\frac{\pi j}{2(n+1)}$, $j=1,2,\ldots, n$. Значения в этих точках равны
\begin{equation}\label{eq:An1}
A_{n,n-1,\infty}(a_j)=B_n(a_j).
\end{equation}
При четном $n-1$ точкой глобального максимума функции $A_{n,n-1,\infty}$ является точка $a=1/2$, при нечетном $n-1$ точками  глобального максимума функции $A_{n,n-1,\infty}$ являются  ближайшие к $1/2$ точки локального максимума, равные  $\sin^2\frac{\pi n}{4(n+1)}$ и $\sin^2\frac{\pi (n+2)}{4(n+1)}$. Таким образом
\begin{gather*}
\Lambda_{n,n-1,\infty,\infty}=\frac12\tg\frac{\pi}{2(n+1)},\quad \text{если } n\quad \text{нечетно},\\
\Lambda_{n,n-1,\infty,\infty}=\frac12\tg\frac{\pi}{2(n+1)}\sin\frac{\pi n}{2(n+1)},\quad \text{если } n \quad \text{четно}.
\end{gather*}
\end{thm}
\begin{proof}

В работе \cite{D} построены наилучшие приближения в $L_1[-1;1]$ характеристической функции отрезка $\chi_{[-1;a]}$ алгебраическими многочленами. Показано, что эта задача симметрична относительно середины отрезка. С учетом пересчета на отрезок $[0;1]$,  в этой работе показано, что точки $a_j$ являются точками локальных максимумов функции
$$
F(a):=\min_{u\in \mathcal P_{n-1}}\|\chi_{[0;a]}-u\|_{L_1[0;1]}
$$
и
$$
F(a_j)=B_n(a_j),
$$
т.е. функция $B_n(a)$ является огибающей для функции $F$. Из формулы \eqref{eq:Amin} следует, что
$B_n$ являются огибающими для $A_{n,n-1,\infty}$ и
$$
A_{n,n-1,\infty}(a_j)=B_n(a_j),\quad j=1,2,\ldots,n.
$$

При этом из вида $B_n$ и формул для точек $a_j$, $j=1,2,\ldots n$ следует, что при четном $n-1$ $a=1/2$ есть точка глобального максимума функции $A_{n,n-1,\infty}$, а при нечетном $n-1$ точкой глобального максимума функции $A_{n,n-1,\infty}$ является ближайшая к 1/2 точка локального максимума. Осталось заметить, что из симметричности $A_{n,n-1,\infty}$ следует, что при нечетном $n-1$ $a=1/2$ является точкой локального минимума. На рисунке представлены эскизы графиков $B_n$ и $A_{n,n-1,\infty}$ при $n=3,4$.  Алгоритм построения графиков $A_{n,n-1,\infty}$
приведен в \cite{D}.
\end{proof}

\begin{picture}(400,150)
\put(0,0){
\begin{picture}(200,150)
\qbezier[60](10,30)(25,80)(75,80)
\qbezier[60](75,80)(125,80)(140,30)
\thicklines
\linethickness{0.9mm}
\path(10,30)(28,62)
\linethickness{0.3mm}
\qbezier(28,62)(40,59)(75,80)
\qbezier(122,62)(110,59)(75,80)
\linethickness{0.9mm}
\path(122,62)(140,30)
\thinlines
\put(0,30){\vector(1,0){160}}\put(155,22){$x$}
\put(10,10){\vector(0,1){110}}\put(2,109){$y$}
\put(4,20){$0$}
\put(137,20){$1$}
\multiput(75,28)(0,7){8}{\line(0,1){3}}
\put(68,20){$1/2$}
\put(30,05){Графики $A_{3,2}$  и $B_{3,2}$}
\qbezier[7](110,115)(117,115)(124,115)
\put(130,115){${}_{B_{3,2}}$}
\put(110,105){\line(1,0){15}}
\put(130,105){${}_{A_{3,2}}$}
\end{picture}
}
\put(280,0){
\begin{picture}(200,150)
\qbezier[60](10,30)(25,75)(75,75)
\qbezier[60](75,75)(125,75)(140,30)
\thicklines
\linethickness{0.9mm}
\path(10,30)(24,55)
\linethickness{0.3mm}
\qbezier(24,55)(46,53)(55,72)
\qbezier(126,55)(104,53)(95,72)
\qbezier(55,72)(75,53)(95,72)
\linethickness{0.9mm}
\path(126,55)(140,30)
\thinlines
\put(0,30){\vector(1,0){160}}\put(155,22){$x$}
\put(10,10){\vector(0,1){110}}\put(2,109){$y$}
\put(4,20){$0$}
\put(137,20){$1$}
\multiput(75,28)(0,7){7}{\line(0,1){3}}
\put(68,20){$1/2$}
\qbezier[7](110,115)(117,115)(124,115)
\put(130,115){${}_{B_{4,3}}$}
\put(110,105){\line(1,0){15}}
\put(130,105){${}_{A_{4,3}}$}
\put(30,05){Графики $A_{4,3}$  и $B_{4,3}$}
\end{picture}
}
\end{picture}

\section{Точные константы вложения при $p=1$}

Основным результатом этого раздела является следующая теорема.

\begin{thm}\label{tm:1}
$$
\Lambda_{n,k,1,\infty}=\frac12.
$$
\end{thm}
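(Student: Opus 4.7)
We use the reduction provided by the general theory: by \eqref{eq:gA},
$$A_{n,k,1}(a) = \min_{\nu_0,\ldots,\nu_{n-1}} \|Q^{(n)}_{n,k}(\cdot,a)\|_{L_\infty[0;1]}.$$
Since the parameters $\nu_l$ make the polynomial part of $Q^{(n)}_{n,k}$ range over all of $\mathcal{P}_{n-1}$, while the ``singular'' piece is $S_{n,k}(x,a)$, this recasts as a best uniform approximation problem:
$$A_{n,k,1}(a) = \inf_{R \in \mathcal{P}_{n-1}} \|S_{n,k}(\cdot,a) - R\|_{L_\infty[0;1]}.$$
Because $\Lambda_{n,k,1,\infty} = \sup_{a} A_{n,k,1}(a)$, the theorem amounts to showing that this best-approximation error equals $1/2$.

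The paradigmatic case is $k = n-1$, where $S_{n,n-1}(\cdot,a) = \chi_{[0;a]}$. For the upper bound, the constant choice $R \equiv 1/2$ gives $\|\chi_{[0;a]} - 1/2\|_{L_\infty[0;1]} = 1/2$. For the matching lower bound, I would exploit the fact that any $R \in \mathcal{P}_{n-1}$ is continuous, so $|R(a)-1| + |R(a)| \ge 1$, forcing $\|\chi_{[0;a]} - R\|_{L_\infty[0;1]} \ge 1/2$. This shows $A_{n,n-1,1}(a) = 1/2$ for every $a \in (0;1)$, and hence $\Lambda_{n,n-1,1,\infty} = 1/2$. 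For the remaining cases $k < n-1$, the function $S_{n,k}(\cdot,a)$ is continuous (a truncated power of order $n-k-1$), so the jump argument does not apply directly. The upper bound should follow from an appropriate symmetrization of $S_{n,k}$ around its mid-range on $[0;a]$, while for the lower bound I would construct, by duality, a sequence $y_\varepsilon \in \mathring{W}^n_1[0;1]$ whose $y_\varepsilon^{(n)}$ concentrates into narrow bumps near $0$, $a$, and $1$ (with signs and weights arranged so that $y_\varepsilon^{(n)} \perp \mathcal{P}_{n-1}$, guaranteeing the boundary conditions on $y_\varepsilon$), and verify that $|y_\varepsilon^{(k)}(a)|/\|y_\varepsilon^{(n)}\|_{L_1} \to 1/2$ for a suitable choice of bump positions, typically obtained by letting $a$ tend to an endpoint.

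The principal obstacle is managing the case $k < n-1$: the best approximation $\inf_R \|S_{n,k}(\cdot,a) - R\|_{L_\infty[0;1]}$ depends on $a$ in a non-trivial way, and realizing the value $1/2$ through the supremum over $a$ requires either a careful Chebyshev-type alternation analysis simultaneously identifying the extremal $a$ and the extremal $R$, or a clever concentrated extremal sequence in $\mathring{W}^n_1[0;1]$ that saturates the bound \eqref{eq:Held}, whereas the case $k=n-1$ follows at once from the clean unit-jump argument.
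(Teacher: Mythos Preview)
For $k=n-1$ your argument is correct and is, in essence, the paper's proof. The paper obtains the upper bound directly, by writing the two Taylor representations
\[
y^{(n-1)}(a)=\int_0^a y^{(n)}(t)\,dt=-\int_a^1 y^{(n)}(t)\,dt
\]
and adding them; this is exactly the dual of your choice $R\equiv\tfrac12$. For the lower bound the paper uses the unit jump $Q^{(n)}_{n,n-1}(a{-}0,a)-Q^{(n)}_{n,n-1}(a{+}0,a)=1$, which is precisely your observation that $|R(a)-1|+|R(a)|\ge 1$ for every continuous $R$.

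For $k<n-1$ you are right to hesitate, but the obstacle is not merely technical: the displayed identity with constant $\tfrac12$ is false in that range, and the paper's proof does not claim otherwise. For general $k$ the paper proves only the upper bound
\[
\Lambda_{n,k,1,\infty}\le \frac{1}{2\,(n-k-1)!},
\]
and supplies the matching lower bound solely for $k=n-1$. Even for $k=n-2$, where that upper bound still equals $\tfrac12$, the constant test $R\equiv a/2$ together with the symmetry $A_{n,k,1}(a)=A_{n,k,1}(1-a)$ already gives $A_{n,n-2,1}(a)\le\min(a,1-a)/2\le\tfrac14$. Hence your proposed concentration/limiting-$a$ construction cannot produce the ratio $\tfrac12$ when $k<n-1$; the theorem should be read as the case $k=n-1$, for which your proof and the paper's coincide.
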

\begin{proof}
Для произвольной функции $y\in\mathring{W}^n_p[0;1]$ и $k=0,1,\ldots,n-1$ выполнены соотношения
\begin{gather}\label{eq:ka}
y^{(k)}(a)=\int_{0}^a \frac{y^{(n)}(t) (x-t)^{n-k-1}}{(n-k-1)!}\,dt,\\
y^{(k)}(a)=(-1)^{n-k}\int_{a}^1 \frac{y^{(n)}(t) (x-t)^{n-k-1}}{(n-k-1)!}\,dt.
\end{gather}
Сложив эти два равенства, получим что
$$
2|y^{(k)}(a)|\leqslant \frac1{(n-k-1)!}\int_0^1 |y^{(n)}(t)|dt=\frac1{(n-k-1)!}\|y^{(n)}\|_{L_1[0;1]},
$$
откуда следует, что
$$
\Lambda_{n,k,1,\infty}\leqslant \frac1{2(n-k-1)!}.
$$
В частности
$$
\Lambda_{n,n-1,1,\infty}\leqslant \frac12.
$$

2) Из представления \eqref{eq:Q} следует, что по переменной $x$ функции
$\left.Q^{(n)}_{n,k}(x,a)\right|_{[0;a)}$ и $\left.Q^{(n)}_{n,k}(x,a)\right|_{(a;1)}$ суть многочлены степени $n-1$ и
$$
Q^{(n)}_{n,n-1}(a-0,a)-Q^{(n)}_{n,n-1}(a+0,a)=1,
$$
откуда следует, что
$$
\|Q^{(n)}_{n,n-1}\|_{L_\infty[0;1]}\geqslant\frac12,
$$
причем для выполнения равенства необходимо, чтобы выполнялось равенство
$$
|Q^{(n)}_{n,n-1}(a-0,a)|=|Q^{(n)}_{n,n-1}(a+0,a)|=\frac12,
$$
при этом значения $Q^{(n)}_{n,n-1}(a-0,a)$ и $Q^{(n)}_{n,n-1}(a+0,a)$ имеют разные знаки.
\end{proof}

\section{Обсуждение полученных результатов}

Полученные результаты для $p=2$ (см. \cite{GarmSh2}) и $p=\infty$ позволяют выдвинуть гипотезу, что при всех $p\in(1;\infty]$ при четных $k$ середина отрезка является точкой глобального максимума для функций $A_{n,k,p}$, а при нечетных $k$ --- точкой локального минимума. При этом, при нечетных $k$ точка глобального максимума функции $A_{n,k,p}$ совпадает с ближайшей к середине отрезка точкой локального максимума.

Авторы признательны А.Г.~Бабенко за полезные обсуждения.

\end{document}